\documentclass{amsart}
\usepackage{amssymb,amsfonts}
\usepackage{graphicx}
\usepackage{longtable,makecell,url}
\usepackage[hidelinks]{hyperref}
\newtheorem{theorem}{Theorem}
\newtheorem{lemma}[theorem]{Lemma}

\newtheorem{proposition}[theorem]{Proposition}
\newtheorem{remark}[theorem]{Remark}

\begin{document}

\title[Powers of the Thue--Morse Series]{2-adic Valuations of Coefficients
of the Fifth and Ninth Powers of the Thue--Morse Generating Function}

\author{Zhao Shen}
\author{Xinping Wang}

\address{Department of Mathematics, Central South University, Changsha, Hunan, China}
\email{sz1021@csu.edu.cn}

\date{June 26, 2026}
\keywords{Thue--Morse sequence, 2-adic valuation, automatic sequence, generating function,
binomial determinant}
\subjclass[2020]{11A63, 11B85, 15A15}

\begin{abstract}
Let $T(x)=\prod_{k=0}^{\infty}(1-x^{2^k})$ be the generating function of the Thue--Morse
sequence, and write $T(x)^m=\sum_{n\geq 0}t_m(n)x^n$.
We prove exact formulas for the $2$-adic valuations of the coefficients $t_5(n)$ and $t_9(n)$:
\[
\nu_2\bigl(t_5(4n+j)\bigr)
  =4\Bigl\lceil\tfrac{\nu_2(n+1)}{2}\Bigr\rceil-\bigl(\nu_2(n+1)\bmod 2\bigr),
\quad j\in\{0,1,2,3\},
\]
\[
\nu_2\bigl(t_9(8n+j)\bigr)
  =5\Bigl\lceil\tfrac{\nu_2(n+1)}{2}\Bigr\rceil-2\bigl(\nu_2(n+1)\bmod 2\bigr),
\quad j\in\{0,1,\ldots,7\}.
\]
These formulas confirm Conjecture~5.2 of Gawron--Miska--Ulas~\cite{ga} for $m=5$ and $m=9$,
and imply that $t_5(n)\neq 0$ and $t_9(n)\neq 0$ for every $n\geq 0$.
A key structural ingredient is a closed-form formula for the determinant of a
family of matrices with binomial-coefficient entries.
\end{abstract}

\maketitle

\section{Introduction}

The Thue--Morse sequence $\{t(n)\}_{n\geq 0}$ is defined by $t(0)=1$ and the
recurrences $t(2n)=t(n)$, $t(2n+1)=-t(n)$.
Its generating function is the infinite product
\begin{equation}\label{eq:T}
T(x)=\prod_{k=0}^{\infty}(1-x^{2^k}).
\end{equation}
For an integer $m\geq 1$ we write
\begin{equation}\label{eq:tm}
T(x)^m=\sum_{n=0}^{\infty}t_m(n)\,x^n,
\end{equation}
so $t_1(n)=t(n)$.  Arithmetic properties of $\{t_m(n)\}_{n\geq 0}$ were studied
systematically by Gawron, Miska and Ulas~\cite{ga}.  The unboundedness of the coefficients of
$T(x)^m$ is now known in full generality by the preprint~\cite{shen-unbounded}.  By contrast,
the general case of Conjecture~5.2 of~\cite{ga}, which concerns exact $2$-adic valuation formulas
for odd exponents, remains open.

The case $m=5$ was worked out jointly by Xinping Wang and Zhao Shen and first
appeared in the undergraduate thesis~\cite{wang2026thesis}.
The case $m=9$ requires a more delicate congruence analysis.
Both proofs rely on block matrix recurrences derived from the functional equation
$T(x)=(1-x)T(x^2)$, together with the following determinant identity.

\begin{theorem}\label{thm:det}
For any integer $m\ge 2$, let $M_m$ be the $(m-1)\times(m-1)$ matrix whose $(i,j)$-entry is
\[
(M_m)_{i,j}=\binom{m}{2i-j},
\]
with the convention $\binom{m}{k}=0$ whenever $k<0$ or $k>m$.  Then
\[
\det(M_m)=2^{\,m(m-1)/2}.
\]
\end{theorem}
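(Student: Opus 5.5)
The plan is to read $M_m$ as the matrix of a linear operator on polynomials, embed it in a slightly larger operator whose determinant is the same, and compute that determinant by triangularizing with respect to the order of vanishing at $1$.

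\emph{The operator.} Let $P_m$ be the space of polynomials of degree at most $m$. Define $A\colon P_m\to P_m$ by
\[
A p(y)=\sum_{i}[x^{2i}]\bigl((1+x)^m p(x)\bigr)\,y^i,
\]
or equivalently
\[
(Ap)(x^2)=\tfrac12\bigl((1+x)^m p(x)+(1-x)^m p(-x)\bigr).
\]
Since $\deg\bigl((1+x)^m p\bigr)\le 2m$, we have $Ap\in P_m$. In the monomial basis $1,x,\dots,x^m$ (inputs) and $1,y,\dots,y^m$ (outputs), the $(i,j)$ entry of $A$ is $[x^{2i}]\,x^j(1+x)^m=\binom{m}{2i-j}$, for $0\le i,j\le m$.

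\emph{Reduction to $M_m$.} Row $i=0$ of this $(m+1)\times(m+1)$ matrix has the single nonzero entry $1$, in column $0$. Row $i=m$ has the single nonzero entry $1$, in column $m$. Expanding the determinant along these two rows leaves exactly the minor with $1\le i,j\le m-1$, which is $M_m$, with sign $+1$. Hence $\det M_m=\det A$.

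\emph{Triangularizing $A$.} Consider the filtration $F_k=\{p\in P_m:(y-1)^k\mid p\}$ for $0\le k\le m$. Suppose $p$ vanishes to order $\ge k$ at $1$, with $k\le m$. Then $(1+x)^m p(x)$ vanishes to order $\ge k$ at $x=1$, and $(1-x)^m p(-x)$ vanishes to order $\ge m\ge k$ there. So $(Ap)(x^2)$ vanishes to order $\ge k$ at $x=1$. Since $x^2-1=(x-1)(x+1)$ and $x+1$ is a unit near $x=1$, this means $Ap$ vanishes to order $\ge k$ at $y=1$. Thus $A$ preserves every $F_k$. In the basis $(y-1)^k$, $k=0,\dots,m$, the matrix of $A$ is therefore triangular, and $\det A$ is the product of the diagonal entries $\lambda_k$. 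Here $\lambda_k$ is defined by
\[
A(y-1)^k\equiv\lambda_k (y-1)^k \pmod{(y-1)^{k+1}}.
\]

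\emph{The diagonal entries.} Compare leading terms at $x=1$. For $k<m$:
\[
(1+x)^m(x-1)^k\approx 2^m(x-1)^k,\qquad (x^2-1)^k\approx 2^k(x-1)^k,
\]
and the second summand has order $m>k$, so it does not contribute. Hence $\lambda_k=\tfrac12\cdot 2^m/2^k=2^{m-k-1}$. For $k=m$, both summands contribute $(x-1)^m(x+1)^m=(x^2-1)^m$, giving $\lambda_m=1$. Therefore
\[
\det M_m=\prod_{k=0}^{m-1}2^{m-k-1}=2^{m(m-1)/2}.
\]

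The main point needing care is the filtration step: checking that the order of vanishing transfers correctly from $x$ to $y=x^2$ (this works because $x+1$ is invertible near $x=1$), and the special case $k=m$. The rest is bookkeeping.
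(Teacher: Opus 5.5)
Your proof is correct, and it takes a genuinely different route from the paper's. The paper inducts on $m$. It factors $M_m=B_mC_m$ by Pascal's rule and applies Cauchy--Binet. It then identifies the maximal minors of $B_m$ as $c\binom{m-1}{k-1}$ with $c=\det M_{m-1}$, using the kernel vector $\bigl((-1)^{j-1}\binom{m-1}{j-1}\bigr)_j$ that comes from $(1+x)^{m-1}(1-x)^{m-1}=(1-x^2)^{m-1}$. This gives the recurrence $\det M_m=2^{m-1}\det M_{m-1}$.

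You instead realize the bordered matrix $\bigl(\binom{m}{2i-j}\bigr)_{0\le i,j\le m}$ as the operator on polynomials of degree at most $m$ that takes the even part of $(1+x)^m p(x)$ and reads it in $y=x^2$. You observe that the two border rows are $e_0^T$ and $e_m^T$, so the determinant is unchanged. You then triangularize along the flag of vanishing order at $y=1$. The steps that needed care all hold: the transfer of vanishing order from $x$ to $y=x^2$, the leading-coefficient computation $\lambda_k=2^{m-1-k}$ for $k<m$, and $\lambda_m=1$.

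Your argument is non-inductive and avoids Cauchy--Binet and the rank/kernel bookkeeping. It also proves more. Since the two border rows give $\det(tI-\widetilde{M})=(t-1)^2\det(tI-M_m)$ for the bordered matrix $\widetilde{M}$, you get that $M_m$ has eigenvalues exactly $2,4,\dots,2^{m-1}$. For example, $M_3$ has rows $(3,1),(1,3)$ and eigenvalues $2,4$. The paper's argument stays within elementary determinant manipulations and gives the recurrence in $m$ directly, but it carries no spectral information.
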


\begin{theorem}\label{thm:m5}
For every integer $n\ge 0$ and every $j\in\{0,1,2,3\}$,
\[
\nu_2\bigl(t_5(4n+j)\bigr)
  =4\Bigl\lceil\tfrac{\nu_2(n+1)}{2}\Bigr\rceil-\bigl(\nu_2(n+1)\bmod 2\bigr).
\]
In particular, $t_5(n)\neq 0$ for all $n\ge 0$.
\end{theorem}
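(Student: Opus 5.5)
Writing $T^5(x)=(1-x)^5T(x^2)^5$ gives
\[
t_5(n)=\sum_{k}(-1)^{n-2k}\binom{5}{n-2k}\,t_5(k).
\]
Grouping $n$ modulo $4$ and $k$ modulo $2$, I will set up the block vector
\[
V(n)=\bigl(t_5(4n),t_5(4n+1),t_5(4n+2),t_5(4n+3)\bigr)^{T}.
\]
I will then express $V(2n+r)$, for $r\in\{0,1\}$, as a fixed integer matrix applied to $V(n)$ and $V(n-1)$, possibly also $V(n+1)$. Because $\binom{5}{\cdot}$ is supported on $[0,5]$, only a bounded window of previous blocks enters. The entries of these matrices are $\pm\binom{5}{2i-j}$-type numbers, which is exactly the shape in Theorem~\ref{thm:det}. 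So $\det M_5=2^{10}$ controls how much $2$-adic information is lost or gained per step.

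The statement depends only on $\nu_2(n+1)$, so I will induct along $n+1=2^{a}(2b+1)$. The base case $a=0$ asserts that all four entries of $V(n)$ are odd whenever $n$ is even. I will prove this modulo $2$ using $T(x)^5\equiv T(x)(1+x)^4T(x^2)^4 \pmod 2$, or directly from the recurrence mod $2$, since the matrices mod $2$ reduce to a simple pattern.

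For the inductive step I will pass from $n$ with $\nu_2(n+1)=a$ to $2n+1$, which has $\nu_2=a+1$. Here $V(2n+1)$ is a combination of $V(n)$ and $V(n+1)$, and $n+1$ is odd-indexed in the appropriate sense. I expect a cleaner route is to strengthen the hypothesis to a statement about the vector $V(n)$ modulo higher powers of $2$. The claim would be of the form
\[
V(n)\equiv 2^{e(a)}\,u_a \pmod{2^{e(a)+c}},
\]
with an explicit unit-coordinate vector $u_a$, depending only on $a \bmod 2$, and with $e(a)=4\lceil a/2\rceil-(a\bmod 2)$. This matches the increments $+3$ and $+1$ alternating, so a two-step induction $a\to a+2$, adding $4$, should close. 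I will compute the composed two-step transfer matrix explicitly. Its action on vectors congruent to $u_a$ should multiply by $2^4$ times a unit-coordinate vector, modulo the error term. The neighboring blocks $V(n\pm1)$ that appear have $\nu_2$-index $0$, hence are odd vectors with known residue mod $2$ (or mod $4$). Their contributions must be shown to cancel to the required depth.

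The main obstacle is exactly this cancellation. The raw combination of odd vectors must vanish to order $2^{e(a+1)}$, and this requires tracking residues modulo $2^{5}$ or so of the neighbors, not just parity. I will handle it by a simultaneous induction on a finite set of residue classes of the vector $(V(n-1),V(n),V(n+1))$ modulo a fixed power of $2$. This is essentially a finite automaton check: Theorem~\ref{thm:det} guarantees invertibility up to $2^{10}$, so a bounded modulus suffices. Finally, nonvanishing of $t_5$ follows since every valuation is finite.
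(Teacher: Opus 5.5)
\textbf{There is a genuine gap: the inductive step is not supplied, and the mechanism you sketch for it would not close.} Your base case is fine, and it can be made explicit. Over $\mathbb{F}_2$ one has $T(x)\equiv 1/(1+x)$, hence $T(x)^5\equiv 1/\bigl((1+x)(1+x^4)\bigr)$ and $t_5(N)\equiv\lfloor N/4\rfloor+1\pmod 2$.

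First, the obstacle is misidentified. Since $8n+4+j-2k\in[0,5]$ forces $4n\le k\le 4n+3$, one has exactly $V(2n+1)=JA_5J\,V(n)=-A_5V(n)$. So no neighbouring blocks enter and nothing has to cancel. The real task is to show that every entry of $A_5^{a}w$ has valuation exactly $g_5(a)$, for every base vector $w=V(n)$ with $n$ even.

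Second, a bounded-modulus automaton cannot do this. The target valuations are unbounded. If you normalise by $2^{e(a)}$, the two-step matrix $A_5^2$ (entries $15,-45,-35,1,\dots$) is not divisible by $16$. So knowing $V(n)$ modulo $2^{e(a)+c}$ only gives the normalised vector two levels up modulo $2^{c-4}$, and precision drains away. The identity $\det M_5=2^{10}$ does not help: it is just the product of the eigenvalues $\pm4,\pm8$ of $A_5$, and it gives no control of $\nu_2(A_5^a w)$ for a particular $w$.

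Your proposed strengthening is also false beyond parity. One has $V(0)\equiv(1,3,1,3)^T$ while $V(2)\equiv(3,1,3,1)^T\pmod 4$, so $u_0$ cannot depend only on $a\bmod 2$. Even the single step $a=0\to1$ needs more than the parity of $w$: $A_5(1,1,1,1)^T=(-16,16,-16,16)^T$ has valuation $4$, not $3$.

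The missing idea is the relation $A_5^4=80A_5^2-1024I$, i.e.\ $(A_5^2-16I)(A_5^2-64I)=0$. On a vector of the form $v=A_5^2y$ with $y$ integral, it gives $A_5^2v=80v-1024y$. That is multiplication by $2^4\cdot 5$, up to an error divisible by $2^{10}$. In scalar form,
$t_5(16N-k)=80\,t_5(4N-k)-1024\,t_5(N-k)$ for $k=1,\dots,4$.
Take $n+1=2^rn_1$ with $n_1$ odd and $N=2^{r-2}n_1$. The first term has valuation $4+g_5(r-2)=g_5(r)$. The second has valuation $\ge 10>g_5(r)$ for $r=2,3$, and $10+g_5(r-4)=g_5(r)+2$ for $r\ge4$. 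So the first term strictly dominates, and the induction reduces to $r=0$ and $r=1$.

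For $r=1$ the paper avoids residues of $V(n)$ altogether by stepping back to the unaligned vector $V^{(5)}_{n_1}$. Since $A_5^3$ is $8$ times a matrix with all entries odd, $t_5(8n_1-k)=8\cdot(\text{odd-coefficient combination of }t_5(n_1-1),\dots,t_5(n_1-4))$. For odd $n_1$, an odd number (one or three) of these four values are odd, so the combination is odd. Without these two ingredients, or an equivalent eigenspace decomposition of $A_5^2$ over $\mathbb{Q}_2$, your plan does not yet prove the theorem.
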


\begin{theorem}\label{thm:m9}
For every integer $n\ge 0$ and every $j\in\{0,1,\ldots,7\}$,
\[
\nu_2\bigl(t_9(8n+j)\bigr)
  =5\Bigl\lceil\tfrac{\nu_2(n+1)}{2}\Bigr\rceil-2\bigl(\nu_2(n+1)\bmod 2\bigr).
\]
In particular, $t_9(n)\neq 0$ for all $n\ge 0$.
\end{theorem}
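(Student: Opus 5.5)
The plan is to set up a block recurrence for $t_9$ from the functional equation, reduce the valuation question to the $2$-adic behaviour of a fixed $8\times 8$ integer matrix, and control that matrix via Theorem~\ref{thm:det}. From $T(x)=(1-x)T(x^2)$ we get $T(x)^9=(1-x)^9T(x^2)^9$, and hence
\[
t_9(n)=\sum_{k=0}^{9}(-1)^k\binom{9}{k}\,t_9\!\left(\tfrac{n-k}{2}\right),
\]
where terms with non-integer or negative argument vanish. Define the block vector $V(n)=(t_9(8n),\dots,t_9(8n+7))^{T}$. Applying the recurrence to the indices $8(2n+\epsilon)+j$, $\epsilon\in\{0,1\}$, each entry becomes a combination of $t_9(8n+i)$ with $i\in\{-1,0,\dots,7,8\}$. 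So it involves $V(n)$ and the boundary entries $t_9(8n-1)$ and $t_9(8n+8)$.

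To close the system we use a window vector $W(n)$ of length $m-1=8$, indexed so that
\[
W(2n+\epsilon)=A_\epsilon W(n)+(\text{correction terms}),
\]
with $A_\epsilon$ built from the entries $(-1)^k\binom{9}{k}$. The natural choice is the vector of consecutive values $t_9(N),\dots,t_9(N+7)$ with $N=8n+c$ for a suitable offset $c$, chosen so that the doubling map sends a window exactly to two windows. Then $A_0,A_1$ are submatrices of $\bigl((-1)^{2i-j}\binom{9}{2i-j}\bigr)$. After signs are absorbed by a diagonal $\pm1$ conjugation, their product, or $A_0$ itself, is $M_9$ from Theorem~\ref{thm:det}, with determinant $2^{36}$. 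This shows that these matrices are invertible over $\mathbb{Q}$ and bounds how much $2$-divisibility they can create.

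Next, write $n+1=2^{v}u$ with $u$ odd, so that $8n+j$ lies in the block obtained from the block of $u-1$ by $v$ applications of the "$1$" branch. Concretely, $n=2^{v}u-1$, so its binary expansion ends in $v$ ones, and $V(n)=A_1^{\,v}V(u-1)$ modulo the boundary handling. The statement then splits into two claims.

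First, for even index $u-1$, every entry of $V(u-1)$ is odd. This is the case $v=0$, and it follows because $T(x)^9\equiv T(x)\cdot T(x^8)\pmod 2$, since $T^8\equiv T(x^8)$. The coefficients on each block of $8$ are therefore $\equiv t(\cdot)\cdot(\text{product})$, and we check that these are odd.

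Second, the $2$-adic valuations of the entries of $A_1^{v}w$ for odd $w$ are all equal to $5\lceil v/2\rceil-2(v\bmod 2)$. The key step is to compute $A_1^2$ and show that $A_1^2=2^5P$ with $P$ invertible mod $2$, or more precisely that $P$ maps the relevant class of vectors, those with all entries odd, to vectors with all entries odd. Similarly, $A_1$ should equal $2^3Q$ with the analogous property. This would give valuations $5k$ for $v=2k$ and $5k+3=5(k+1)-2$ for $v=2k+1$, as the statement requires. Because the determinant $2^{36}$ spread over $8$ coordinates does not directly give these uniform valuations, we should track a lattice invariant of the form "vector in $2^{a}(\text{odd},\dots,\text{odd})+2^{a+1}L$" for a suitable sublattice $L$, and check that $A_1$ alternates between two such invariant classes.

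The main obstacle is this second claim. Finding the right invariant sublattice requires explicit $2$-adic reduction of the $8\times8$ matrices $A_1$ and $A_1^2/2^5$, likely with a change of basis, for example to differences of consecutive coefficients. This basis change should make the matrix upper triangular mod $2$. Theorem~\ref{thm:det} guarantees that no extra factors of $2$ beyond the determinant budget appear, and I would try to use it to confirm that the reduced matrix is unimodular mod $2$ on the invariant class. Nonvanishing of $t_9(n)$ then follows immediately, since every valuation obtained is finite.
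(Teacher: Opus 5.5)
\textbf{There is a genuine gap: your second claim carries the whole theorem for $v\ge1$, it is left unproved, and the mechanism you propose for it is false.}

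\textbf{What is correct.} Your reduction is sound and needs no boundary correction. With $V(n)=(t_9(8n),\dots,t_9(8n+7))^T$, the recurrence gives exactly $V(2n+1)=A_1V(n)$, where $(A_1)_{j,i}=(-1)^j\binom{9}{8+j-2i}$ for $0\le i,j\le 7$; that is, $A_1=-A_9$. Hence $V(2^vu-1)=(-A_9)^vV(u-1)$. This is the paper's $X_{2^{r+2}n_1}=A_9^{\,r}X_{4n_1}$ read through the reversal $J$, using $JA_9J=-A_9$. Your case $v=0$ is also fine, and slicker than the paper's: $T(x)^9\equiv T(x)T(x^8)\equiv 1/\bigl((1-x)(1-x^8)\bigr)\pmod 2$ gives $t_9(8m+j)\equiv m+1\pmod 2$.

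\textbf{Why the proposed mechanism fails.}
\begin{itemize}
\item $A_1=-A_9$ has odd entries ($\pm1,\pm9$), and so does $A_1^2=A_9^2$ (e.g.\ $-3$, $27$, $1$). So neither $A_1=2^3Q$ nor $A_1^2=2^5P$ holds.
\item The two claims are also mutually inconsistent, since $2^3\mid A_1$ would force $2^6\mid A_1^2$.
\item Theorem~\ref{thm:det} itself rules out $A_1^2=2^5P$ with $P$ invertible mod~$2$: that would give $\nu_2(\det A_1)=20$, whereas $\det A_1=\pm2^{36}$.
\item ``All entries odd'' is not a usable input class. Every row of $A_1$ sums to $\pm2^8$, so $A_1\mathbf 1=2^8(1,-1,\dots,1,-1)^T$ has valuation $8$. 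For $w=(1,1,1,1,3,1,1,1)^T$ one gets $(A_1w)_1=258$, of valuation $1$. The required value is $3$ in both cases.
\end{itemize}
Since $A_1$ has odd entries and the case $v=1$ is a statement modulo $16$, the parity of $V(u-1)$ cannot decide it. You need residue information on the specific vectors $V(u-1)$ beyond parity.

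\textbf{Why the determinant cannot supply this.} $\det A_9=\pm2^{36}$ only records a total. The rate of $5$ per two doublings is the \emph{smallest} slope of the $2$-adic Newton polygon of $\mu_{A_9}$. Written in $Y=X^2$, its coefficients have valuations $0,5,11,22,36$, so the eigenvalues of $A_9^2$ have valuations $5,6,11,14$. Uniform valuations in all eight coordinates then depend on where the actual starting vectors sit.

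\textbf{What the paper does instead.} Its proof does not extract the valuations from Theorem~\ref{thm:det}.
\begin{itemize}
\item It starts two doublings earlier, from $X_{n_1}$ with $n_1$ odd. It determines $X_{n_1}\bmod 8$ via the mod-$8$ block recurrence, which pins down $X_{4n_1}=A_9^2X_{n_1}$ modulo $8$, not just modulo $2$.
\item It then checks the explicit powers $A_9^2,A_9^3,A_9^4=2^3P_4,\dots,A_9^7=2^{10}P_7$ against these residue classes to settle $r=0,\dots,5$. If you follow this route, note that $A_9^3\equiv\mathbf 1\mathbf 1^T\pmod 2$. So $A_9^3X_{n_1}\bmod 16$ also depends on $\mathbf 1^TX_{n_1}\bmod 16$, not only on $X_{n_1}\bmod 8$.
\item For $r\ge6$ it uses the minimal polynomial:
\[
X_{2^{r+2}n_1}=6560X_{2^rn_1}-8472576X_{2^{r-2}n_1}+2235564032X_{2^{r-4}n_1}-2^{36}X_{2^{r-6}n_1}.
\]
By induction and $g_9(r)-g_9(r-2)=5$, the first term strictly dominates.
\end{itemize}
This recurrence is what replaces your hoped-for invariant sublattice: once the base cases are known, the induction propagates the valuations themselves. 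As written, your argument does not get beyond $v=0$.
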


\noindent\textit{Organisation.}
Section~\ref{sec:prelim} recalls the block recurrences from~\cite{ga}.
Section~\ref{sec:det} proves Theorem~\ref{thm:det}.
Sections~\ref{sec:m5} and~\ref{sec:m9} prove
Theorems~\ref{thm:m5} and~\ref{thm:m9}, respectively.

\section{Preliminaries}\label{sec:prelim}

For integers $a,b\ge 0$, raising
\[
T(x)=\prod_{n=0}^{a}(1-x^{2^n})\cdot T(x^{2^{a+1}})
\]
to the $b$-th power gives
\begin{equation}\label{eq:h}
\sum_{n=0}^{\infty}t_b(n)\,x^n
  =\prod_{n=0}^{a}(1-x^{2^n})^b\cdot\sum_{n=0}^{\infty}t_b(n)\,x^{2^{a+1}n}.
\end{equation}
Comparing coefficients in~\eqref{eq:h} with $a=0$ yields (see~\cite[Lemma~3.1]{ga}):

\begin{lemma}[{\cite[Lemma 3.1]{ga}}]\label{lem:rec}
For $m\ge 1$ and $n\ge 0$ (with $t_m(n)=0$ for $n<0$),
\begin{align}
t_m(2n)   &=\sum_{j=0}^{\lfloor m/2\rfloor}\tbinom{m}{2j}\,t_m(n-j),\label{eq:even}\\[2pt]
t_m(2n+1) &=-\sum_{j=0}^{\lfloor(m-1)/2\rfloor}\tbinom{m}{2j+1}\,t_m(n-j).\label{eq:odd}
\end{align}
\end{lemma}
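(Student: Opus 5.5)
The plan is to read off both identities from the case $a=0$ of \eqref{eq:h}, which is the functional equation $T(x)=(1-x)T(x^2)$ raised to the $m$-th power:
\[
\sum_{N\ge 0}t_m(N)\,x^N=(1-x)^m\sum_{n\ge 0}t_m(n)\,x^{2n}.
\]
Since the sum on the right involves only even powers of $x$, the parity of the exponent coming from $(1-x)^m$ fixes the parity of $N$. This separates the two cases.

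First expand $(1-x)^m=\sum_{k=0}^{m}(-1)^k\binom{m}{k}x^k$. The coefficient of $x^N$ on the right-hand side is then
\[
\sum_{\substack{0\le k\le m,\ n'\ge 0\\ k+2n'=N}}(-1)^k\binom{m}{k}\,t_m(n').
\]
Equating this with $t_m(N)$ gives both formulas.

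For $N=2n$, the constraint forces $k$ to be even. Writing $k=2j$ with $0\le j\le\lfloor m/2\rfloor$ gives $n'=n-j$ and sign $+1$, which is \eqref{eq:even}. For $N=2n+1$, the exponent $k$ is odd. Writing $k=2j+1$ with $0\le j\le\lfloor (m-1)/2\rfloor$ gives $n'=n-j$ and sign $-1$, which is \eqref{eq:odd}.

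The only point needing care is the boundary. The requirement $n'\ge 0$ is absorbed by the convention $t_m(n)=0$ for $n<0$, so the sums can be written over the full ranges of $j$. There is no real obstacle: all manipulations are identities of formal power series, and comparing coefficients is legitimate.
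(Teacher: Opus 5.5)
Your proposal is correct and matches the paper's argument: the paper also sets $a=0$ in \eqref{eq:h}, i.e.\ uses $T(x)^m=(1-x)^mT(x^2)^m$, and compares coefficients of $x^{2n}$ and $x^{2n+1}$ according to the parity of the exponent taken from $(1-x)^m$. Your index ranges and your use of the convention $t_m(n)=0$ for $n<0$ are also right.
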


For every integer $m\ge 2$, let
\[
V^{(m)}_n:=\bigl(t_m(n-1),t_m(n-2),\ldots,t_m(n-m+1)\bigr)^T.
\]
Then Lemma~\ref{lem:rec} can be written in matrix form as
\begin{equation}\label{eq:matrix-rec}
V^{(m)}_{2n}=A_mV^{(m)}_n,
\end{equation}
where $A_m=(a_{ij})_{1\le i,j\le m-1}$ is the $(m-1)\times(m-1)$ matrix defined by
\begin{equation}\label{eq:Am-def}
a_{ij}=(-1)^i\binom{m}{2j-i},
\end{equation}
with the convention $\binom{m}{k}=0$ for $k<0$ or $k>m$.
Indeed, the $i$-th entry of $V^{(m)}_{2n}$ is $t_m(2n-i)$; using \eqref{eq:even} when $i$ is even
and \eqref{eq:odd} when $i$ is odd gives the same coefficient formula \eqref{eq:Am-def}.

\begin{proposition}\label{prop:block}
Let $m=2d+1$ be odd, and let $J$ be the reversal permutation matrix of size $m-1$.
Define vectors
\[
u_j:=e_j+e_{m-j},\qquad v_j:=e_j-e_{m-j},\qquad 1\le j\le d.
\]
Then, in the ordered basis
\[
(u_1,\ldots,u_d,\,v_1,\ldots,v_d),
\]
the matrix $A_m$ has the block form
\[
A_m\sim
\begin{pmatrix}
0 & C_m\\
D_m & 0
\end{pmatrix},
\]
where $C_m=(c^{+}_{ij})_{1\le i,j\le d}$ and $D_m=(c^{-}_{ij})_{1\le i,j\le d}$ are given by
\[
c^{+}_{ij}=(-1)^i\left(\binom{m}{2j-i}-\binom{m}{2j+i-m}\right),
\]
\[
c^{-}_{ij}=(-1)^i\left(\binom{m}{2j-i}+\binom{m}{2j+i-m}\right).
\]
Consequently,
\[
A_m^2\sim
\begin{pmatrix}
C_mD_m & 0\\
0 & D_mC_m
\end{pmatrix},
\]
and hence
\[
\chi_{A_m^2}(Y)=\chi_{C_mD_m}(Y)^2=\chi_{D_mC_m}(Y)^2.
\]
\end{proposition}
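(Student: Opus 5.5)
The approach is to exploit the symmetry of $A_m$ under the reversal $J$, where $Je_i=e_{m-i}$. First I would show that $JA_mJ=-A_m$ when $m$ is odd. The $(i,j)$-entry of $JA_mJ$ is
\[
a_{m-i,m-j}=(-1)^{m-i}\binom{m}{2(m-j)-(m-i)}=-(-1)^{i}\binom{m}{m-2j+i}=-(-1)^i\binom{m}{2j-i},
\]
using $\binom{m}{k}=\binom{m}{m-k}$ and the oddness of $m$. So $A_m$ anticommutes with $J$, i.e.\ $A_mJ=-JA_m$. The vectors $u_j$ span the $(+1)$-eigenspace of $J$ and the $v_j$ span the $(-1)$-eigenspace; since $m-1=2d$ is even, there is no fixed middle index, so these $2d$ vectors form a basis. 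Anticommutation means $A_m$ maps the $(+1)$-eigenspace into the $(-1)$-eigenspace and vice versa. This gives the off-diagonal block form directly.

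Next I would compute the blocks explicitly. The $i$-th coordinate of $A_m e_j$ is $a_{ij}$, and that of $A_me_{m-j}$ is $a_{i,m-j}=(-1)^i\binom{m}{2m-2j-i}=(-1)^i\binom{m}{2j+i-m}$. Hence
\[
(A_m u_j)_i=(-1)^i\Bigl(\binom{m}{2j-i}+\binom{m}{2j+i-m}\Bigr),\qquad (A_m v_j)_i=(-1)^i\Bigl(\binom{m}{2j-i}-\binom{m}{2j+i-m}\Bigr).
\]
A vector $w$ in the $(-1)$-eigenspace satisfies $w=\sum_{i\le d} w_i v_i$, reading its coefficients off the first $d$ coordinates. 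Similarly, a vector in the $(+1)$-eigenspace satisfies $w=\sum_{i\le d}w_iu_i$. Since $A_mu_j$ lies in the $(-1)$-eigenspace, its $v_i$-coefficient is $c^-_{ij}$, so $D_m$ is the lower-left block. Since $A_mv_j$ lies in the $(+1)$-eigenspace, its $u_i$-coefficient is $c^+_{ij}$, so $C_m$ is the upper-right block. The main thing to get right here is this bookkeeping: which block sits where, and checking that the first $d$ coordinates determine the coefficients.

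Finally, squaring the block matrix gives $\operatorname{diag}(C_mD_m,D_mC_m)$. The characteristic polynomial of a block-diagonal matrix is the product of the blocks' characteristic polynomials. Moreover $\chi_{CD}=\chi_{DC}$ for square matrices of the same size, a standard fact proved, for example, by density of invertible $C$ together with $CD=C(DC)C^{-1}$. Together these give $\chi_{A_m^2}=\chi_{C_mD_m}^2=\chi_{D_mC_m}^2$. I expect no real obstacle: the only substantive step is the anticommutation identity, and the rest is careful indexing.
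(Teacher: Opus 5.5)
Your proof is correct and follows essentially the same route as the paper. The anticommutation $JA_mJ=-A_m$ gives the off-diagonal block form, and reading the coordinates of $A_m(e_j\pm e_{m-j})$ gives $C_m$ and $D_m$; squaring and using $\chi_{CD}=\chi_{DC}$ finishes. Your version is slightly more self-contained: you verify $JA_mJ=-A_m$ explicitly, which the paper leaves to a remark, and you justify reading coefficients off the first $d$ coordinates.
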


\begin{proof}
Since $JA_mJ=-A_m$, the matrix $A_m$ interchanges the $\pm 1$ eigenspaces of $J$.
These eigenspaces are spanned by $(u_1,\ldots,u_d)$ and $(v_1,\ldots,v_d)$, respectively.
Thus $A_m$ has the form
\[
\begin{pmatrix}
0 & C_m\\
D_m & 0
\end{pmatrix}
\]
in that basis.

To compute $C_m$ and $D_m$, note that the coefficient of $v_i$ in $A_mu_j$ is obtained
from the $i$-th coordinate of $A_m(e_j+e_{m-j})$, namely
\[
(A_m)_{i,j}+(A_m)_{i,m-j},
\]
while the coefficient of $u_i$ in $A_mv_j$ is
\[
(A_m)_{i,j}-(A_m)_{i,m-j}.
\]
Using
\[
(A_m)_{i,j}=(-1)^i\binom{m}{2j-i}
\]
and
\[
(A_m)_{i,m-j}=(-1)^i\binom{m}{2(m-j)-i}=(-1)^i\binom{m}{2j+i-m},
\]
we obtain the displayed formulas for $C_m$ and $D_m$.

Squaring the block matrix gives
\[
\begin{pmatrix}
0 & C_m\\
D_m & 0
\end{pmatrix}^2=
\begin{pmatrix}
C_mD_m & 0\\
0 & D_mC_m
\end{pmatrix}.
\]
Finally, since $C_m$ and $D_m$ are square matrices of the same size, the standard identity
\[
\chi_{C_mD_m}(Y)=\chi_{D_mC_m}(Y)
\]
implies
\[
\chi_{A_m^2}(Y)=\chi_{C_mD_m}(Y)\chi_{D_mC_m}(Y)=\chi_{C_mD_m}(Y)^2.
\qedhere
\]
\end{proof}

\begin{remark}
Let $J$ be the reversal permutation matrix of size $m-1$, i.e.
$J_{i,j}=1$ if $i+j=m$ and $J_{i,j}=0$ otherwise.  From
\[
a_{ij}=(-1)^i\binom{m}{2j-i}
\]
one checks directly that, for odd $m$,
\[
JA_mJ=-A_m.
\]
Hence $A_m$ is similar to $-A_m$, so its characteristic polynomial is even:
\[
\chi_{A_m}(X)=\chi_{A_m}(-X).
\]
In particular, the occurrence of only even powers in the characteristic and minimal
polynomials of $A_m$ is forced by this symmetry.
\end{remark}

\section{A determinant formula}\label{sec:det}

\begin{proof}[Proof of Theorem~\ref{thm:det}]
We proceed by induction on $m$.

\paragraph{Base case.}
For $m=2$, $M_2=\bigl[\binom{2}{1}\bigr]=[2]$, so $\det(M_2)=2=2^{2\cdot 1/2}$.

\paragraph{Step 1: Matrix factorisation.}
By Pascal's identity $\binom{m}{k}=\binom{m-1}{k}+\binom{m-1}{k-1}$, write
$M_m=B_mC_m$,
where $B_m$ is the $(m-1)\times m$ matrix with entries
$(B_m)_{i,j}=\binom{m-1}{2i-j}$ ($1\le i\le m-1$, $1\le j\le m$),
and $C_m$ is the $m\times(m-1)$ matrix with $(C_m)_{k,j}=1$ if $k\in\{j,j+1\}$
and $0$ otherwise. Indeed,
\[
(B_mC_m)_{ij}=\binom{m-1}{2i-j}+\binom{m-1}{2i-j-1}=\binom{m}{2i-j}=(M_m)_{ij}.
\]

\paragraph{Step 2: Cauchy--Binet.}
Deleting the $k$-th row of $C_m$ leaves a unit upper-triangular $(m-1)\times(m-1)$ matrix, so
$\det\bigl((C_m)^{\hat k}\bigr)=1$ for every $k$.
Cauchy--Binet therefore gives
\begin{equation}\label{eq:cb}
\det(M_m)=\sum_{k=1}^{m}\det\bigl((B_m)_{\hat k}\bigr).
\end{equation}

\paragraph{Step 3: Kernel of $B_m$.}
Define $\mathbf{v}=(v_1,\dots,v_m)^T$ by $v_j=(-1)^{j-1}\binom{m-1}{j-1}$.
The $i$-th component of $B_m\mathbf{v}$ equals the coefficient of $x^{2i-1}$ in
$(1+x)^{m-1}(1-x)^{m-1}=(1-x^2)^{m-1}$,
which vanishes because $2i-1$ is odd.
Hence $\mathbf{v}\in\ker B_m$; since $\operatorname{rank}B_m=m-1$ the kernel is one-dimensional, so
the vector of signed maximal minors
\[
\bigl((-1)^{k-1}\det\bigl((B_m)_{\hat k}\bigr)\bigr)_{1\le k\le m}
\]
also spans $\ker B_m$.  Therefore there exists a constant $c$ such that
\[
(-1)^{k-1}\det\bigl((B_m)_{\hat k}\bigr)=c(-1)^{k-1}\binom{m-1}{k-1},\qquad k=1,\dots,m,
\]
and hence
\[
\det\bigl((B_m)_{\hat k}\bigr)=c\binom{m-1}{k-1},\qquad k=1,\dots,m.
\]

\paragraph{Step 4: Determine $c$.}
Taking $k=m$: the submatrix $(B_m)_{\hat m}$ has entries $\binom{m-1}{2i-j}$ with
$1\le i,j\le m-1$, which is exactly $M_{m-1}$. Hence $c=\det(M_{m-1})$.

\paragraph{Step 5: Recurrence.}
Substituting into~\eqref{eq:cb},
\[
\det(M_m)=\det(M_{m-1})\sum_{k=1}^{m}\binom{m-1}{k-1}=\det(M_{m-1})\cdot 2^{m-1}.
\]
Iterating from the base case:
$\det(M_m)=2^{m-1}\cdot 2^{m-2}\cdots 2^1=2^{m(m-1)/2}.$
\end{proof}

\section{The case \texorpdfstring{$m=5$}{m=5}}\label{sec:m5}

By \eqref{eq:Am-def}, the matrix $A_5$ from Lemma~\ref{lem:rec} is
\[
A_5=
\begin{pmatrix}
-5 & -10 & -1 & 0\\
1 & 10 & 5 & 0\\
0 & -5 & -10 & -1\\
0 & 1 & 10 & 5
\end{pmatrix}.
\]
Let
\[
U_n:=\bigl(t_5(4n-1),t_5(4n-2),t_5(4n-3),t_5(4n-4)\bigr)^T.
\]
Applying $A_5^2$ to $V_n^{(5)}$ gives
\begin{equation}\label{eq:A5-square}
A_5^2=
\begin{pmatrix}
15 & -45 & -35 & 1\\
5 & 65 & -1 & -5\\
-5 & -1 & 65 & 5\\
1 & -35 & -45 & 15
\end{pmatrix},
\end{equation}
and another computation gives
\begin{equation}\label{eq:A5-fourth}
A_5^4=16
\begin{pmatrix}
11 & -225 & -175 & 5\\
25 & 261 & -5 & -25\\
-25 & -5 & 261 & 25\\
5 & -175 & -225 & 11
\end{pmatrix}
=80A_5^2-1024I_4.
\end{equation}
Applying \eqref{eq:A5-fourth} directly to $V_n^{(5)}$ yields
\begin{equation}\label{eq:t5scalar-alt}
t_5(16n-k)=80t_5(4n-k)-1024t_5(n-k),\qquad k\in\{1,2,3,4\}.
\end{equation}

Next, set $a=2$ and $b=5$ in~\eqref{eq:h}.  For later use we record the explicit expansion
{\footnotesize
\begin{equation}\label{eq:h25}
\setlength{\jot}{2pt}
\begin{aligned}
h^{(2,5)}(x):={}&\prod_{n=0}^{2}(1-x^{2^n})^5 \\
={}&(1-x)^5(1-x^2)^5(1-x^4)^5 \\
={}&1-5x+5x^2+15x^3-40x^4+24x^5+40x^6-120x^7+140x^8+20x^9\\
&-276x^{10}+340x^{11}-120x^{12}-280x^{13}+600x^{14}-424x^{15}-170x^{16}\\
&+610x^{17}-610x^{18}+170x^{19}+424x^{20}-600x^{21}+280x^{22}-120x^{23}\\
&-340x^{24}+276x^{25}-20x^{26}-140x^{27}+120x^{28}-40x^{29}-24x^{30}+40x^{31}\\
&-15x^{32}-5x^{33}+5x^{34}-x^{35}.
\end{aligned}
\end{equation}
}
Substituting \eqref{eq:h25} into~\eqref{eq:h} and comparing coefficients gives the parity relation
\begin{equation}\label{eq:t5-mod2}
t_5(8n+j)\equiv t_5(n)-t_5(n-4)\pmod{2},
\qquad j\in\{0,1,2,3\},
\end{equation}
and the identities
\begin{equation}\label{eq:t5-8n}
\begin{aligned}
t_5(8n-4) &= 8\bigl(-5t_5(n-1)-15t_5(n-2)+53t_5(n-3)+15t_5(n-4)\bigr),\\
t_5(8n-3) &= 8\bigl(3t_5(n-1)-35t_5(n-2)-75t_5(n-3)-5t_5(n-4)\bigr),\\
t_5(8n-2) &= 8\bigl(5t_5(n-1)+75t_5(n-2)+35t_5(n-3)-3t_5(n-4)\bigr),\\
t_5(8n-1) &= 8\bigl(-15t_5(n-1)-53t_5(n-2)+15t_5(n-3)+5t_5(n-4)\bigr).
\end{aligned}
\end{equation}

\begin{proof}[Proof of Theorem~\ref{thm:m5}]
Let
\[
g_5(r):=4\Bigl\lceil\frac{r}{2}\Bigr\rceil-(r\bmod 2).
\]
We prove by induction on $n\ge 0$ that
\[
\nu_2\bigl(t_5(4n+j)\bigr)=g_5\bigl(\nu_2(n+1)\bigr),
\qquad j\in\{0,1,2,3\}.
\]
The explicit expansion \eqref{eq:h25} gives $t_5(0),t_5(1),\ldots,t_5(7)$ immediately.
We compute $t_5(8),\ldots,t_5(15)$ directly as well.  Thus the theorem holds for the initial
block indices $0\le n\le 3$:
\[
\begin{aligned}
&(t_5(0),t_5(1),t_5(2),t_5(3))=(1,-5,5,15),\\
&(\nu_2(t_5(0)),\nu_2(t_5(1)),\nu_2(t_5(2)),\nu_2(t_5(3)))=(0,0,0,0),\\
&(t_5(4),t_5(5),t_5(6),t_5(7))=(-40,24,40,-120),\\
&(\nu_2(t_5(4)),\nu_2(t_5(5)),\nu_2(t_5(6)),\nu_2(t_5(7)))=(3,3,3,3),\\
&(t_5(8),t_5(9),t_5(10),t_5(11))=(135,45,-301,265),\\
&(\nu_2(t_5(8)),\nu_2(t_5(9)),\nu_2(t_5(10)),\nu_2(t_5(11)))=(0,0,0,0),\\
&(t_5(12),t_5(13),t_5(14),t_5(15))=(80,-400,400,176),\\
&(\nu_2(t_5(12)),\nu_2(t_5(13)),\nu_2(t_5(14)),\nu_2(t_5(15)))=(4,4,4,4).
\end{aligned}
\]
These agree with
\[
g_5\bigl(\nu_2(1)\bigr)=0,\quad g_5\bigl(\nu_2(2)\bigr)=3,\quad g_5\bigl(\nu_2(3)\bigr)=0,\quad g_5\bigl(\nu_2(4)\bigr)=4
\]
for $n=0,1,2,3$, respectively.

Assume the formula is true for all indices $<n$.  Write
\[
n=2^r n_1-1,\qquad n_1\equiv 1\pmod{2}.
\]

\medskip
\noindent\textbf{Case $r=0$.}
Then $n$ is even, say $n=2u$.  By \eqref{eq:t5-mod2},
\[
t_5(8u+j)\equiv t_5(u)-t_5(u-4)\pmod 2,
\qquad j\in\{0,1,2,3\}.
\]
Write $u=4q+s$ with $s\in\{0,1,2,3\}$.  Then
\[
t_5(u)=t_5(4q+s),\qquad t_5(u-4)=t_5(4(q-1)+s).
\]
By the induction hypothesis,
\[
\nu_2\bigl(t_5(u)\bigr)=g_5\bigl(\nu_2(q+1)\bigr),
\qquad
\nu_2\bigl(t_5(u-4)\bigr)=g_5\bigl(\nu_2(q)\bigr).
\]
Since one of $q$ and $q+1$ is odd and the other is even, exactly one of
$t_5(u)$ and $t_5(u-4)$ is odd. Hence $t_5(8u+j)$ is odd for every $j$, so
\[
\nu_2\bigl(t_5(4n+j)\bigr)=\nu_2\bigl(t_5(8u+j)\bigr)=0=g_5(0).
\]

\medskip
\noindent\textbf{Case $r=1$.}
Then $n=2n_1-1$ with $n_1$ odd.  Put
\[
S(x,y,z,w):=xt_5(n_1-1)+yt_5(n_1-2)+zt_5(n_1-3)+wt_5(n_1-4).
\]
We claim that $S(x,y,z,w)$ is odd whenever $x,y,z,w$ are odd.

If $n_1\equiv 1\pmod 4$, write $n_1=4q+1$.  Then
\[
\nu_2\bigl(t_5(n_1-1)\bigr)=g_5\bigl(\nu_2(q+1)\bigr),
\]
while
\[
\nu_2\bigl(t_5(n_1-2)\bigr)=\nu_2\bigl(t_5(n_1-3)\bigr)=\nu_2\bigl(t_5(n_1-4)\bigr)=g_5\bigl(\nu_2(q)\bigr).
\]
Again, one of $q$ and $q+1$ is odd, so exactly one of these four values is odd.

If $n_1\equiv 3\pmod 4$, write $n_1=4q+3$.  Then
\[
\nu_2\bigl(t_5(n_1-1)\bigr)=\nu_2\bigl(t_5(n_1-2)\bigr)=\nu_2\bigl(t_5(n_1-3)\bigr)=g_5\bigl(\nu_2(q+1)\bigr),
\]
while
\[
\nu_2\bigl(t_5(n_1-4)\bigr)=g_5\bigl(\nu_2(q)\bigr).
\]
So here as well exactly one of $t_5(n_1-1),\ldots,t_5(n_1-4)$ is odd.

This proves the claim.  Applying \eqref{eq:t5-8n} with $n=n_1$, each
parenthesised linear combination is odd because all coefficients are odd.  Therefore
\[
\nu_2\bigl(t_5(4n+j)\bigr)=3=g_5(1),
\qquad j\in\{0,1,2,3\}.
\]

\medskip
\noindent\textbf{Case $r=2$ or $r=3$.}
For $j\in\{0,1,2,3\}$, set $k:=4-j$.  Since
\[
4n+j=2^{r+2}n_1-k,
\]
equation \eqref{eq:t5scalar-alt} with $n=2^{r-2}n_1$ gives
\[
t_5(4n+j)=80t_5\bigl(4(2^{r-2}n_1-1)+j\bigr)-1024t_5\bigl(2^{r-2}n_1-4+j\bigr).
\]
By the induction hypothesis,
\[
\nu_2\bigl(t_5(4(2^{r-2}n_1-1)+j)\bigr)=g_5(r-2)<6.
\]
Hence the first term has valuation
\[
\nu_2\Bigl(80t_5\bigl(4(2^{r-2}n_1-1)+j\bigr)\Bigr)=4+g_5(r-2)=g_5(r),
\]
while the second term has valuation at least
\[
\nu_2\Bigl(1024t_5\bigl(2^{r-2}n_1-4+j\bigr)\Bigr)\ge 10>g_5(r).
\]
Therefore the first term dominates and
\[
\nu_2\bigl(t_5(4n+j)\bigr)=4+g_5(r-2)=g_5(r).
\]

\medskip
\noindent\textbf{Case $r\ge 4$.}
Using the same identity as in the previous case, the induction hypothesis gives
\[
\nu_2\bigl(t_5(4(2^{r-2}n_1-1)+j)\bigr)=g_5(r-2),
\qquad
\nu_2\bigl(t_5(2^{r-2}n_1-4+j)\bigr)=g_5(r-4).
\]
Thus the first term has valuation $4+g_5(r-2)=g_5(r)$, while the second term has valuation
\[
10+g_5(r-4)=g_5(r)+2>g_5(r).
\]
Hence
\[
\nu_2\Bigl(80t_5\bigl(4(2^{r-2}n_1-1)+j\bigr)\Bigr)=g_5(r)
<g_5(r)+2=
\nu_2\Bigl(1024t_5\bigl(2^{r-2}n_1-4+j)\bigr)\Bigr),
\]
so the first term dominates. Therefore
\[
\nu_2\bigl(t_5(4n+j)\bigr)=g_5(r)=g_5\bigl(\nu_2(n+1)\bigr).
\]
This completes the induction.
\end{proof}

\section{The case \texorpdfstring{$m=9$}{m=9}}\label{sec:m9}

Define the $8$-dimensional column vectors
\[
X_n:=\bigl(t_9(2n-1),\,t_9(2n-2),\,\ldots,\,t_9(2n-8)\bigr)^T,\quad n\ge 1,
\]
\[
U_n:=\bigl(t_9(8n-8),\,t_9(8n-7),\,\ldots,\,t_9(8n-1)\bigr)^T,\quad n\ge 1.
\]
Since $X_n=V^{(9)}_{2n}$, we have
\[
X_{2n}=A_9 X_n,
\]
where
{\scriptsize
\[
A_9=
\begin{pmatrix}
-9 & -84 & -126 & -36 & -1 & 0 & 0 & 0\\
1 & 36 & 126 & 84 & 9 & 0 & 0 & 0\\
0 & -9 & -84 & -126 & -36 & -1 & 0 & 0\\
0 & 1 & 36 & 126 & 84 & 9 & 0 & 0\\
0 & 0 & -9 & -84 & -126 & -36 & -1 & 0\\
0 & 0 & 1 & 36 & 126 & 84 & 9 & 0\\
0 & 0 & 0 & -9 & -84 & -126 & -36 & -1\\
0 & 0 & 0 & 1 & 36 & 126 & 84 & 9
\end{pmatrix}.
\]
}
Iterating this relation gives
\begin{equation}\label{eq:Akrec}
X_{2^k n}=A_9^k\,X_n \qquad (k\ge 0).
\end{equation}
A computation gives
{\scriptsize
\begin{align}
A_9^2&=
\begin{pmatrix}
-3 & -1170 & -153 & 4692 & 891 & -162 & 1 & 0\\
27 & 162 & -3231 & -3060 & 1709 & 306 & -9 & 0\\
-9 & 306 & 1709 & -3060 & -3231 & 162 & 27 & 0\\
1 & -162 & 891 & 4692 & -153 & -1170 & -3 & 0\\
0 & -3 & -1170 & -153 & 4692 & 891 & -162 & 1\\
0 & 27 & 162 & -3231 & -3060 & 1709 & 306 & -9\\
0 & -9 & 306 & 1709 & -3060 & -3231 & 162 & 27\\
0 & 1 & -162 & 891 & 4692 & -153 & -1170 & -3
\end{pmatrix},\label{eq:A9sq}\\[4pt]
A_9^3&=
\begin{pmatrix}
-1143 & -35799 & 26541 & 431613 & 256347 & -3429 & -2385 & -1\\
-81 & 29583 & 163179 & -98277 & -315315 & -58995 & 1369 & 9\\
387 & -6669 & -184785 & -297873 & 109449 & 97273 & 3717 & -27\\
-171 & -9243 & 73737 & 436041 & 232703 & -51057 & -10269 & 3\\
-3 & 10269 & 51057 & -232703 & -436041 & -73737 & 9243 & 171\\
27 & -3717 & -97273 & -109449 & 297873 & 184785 & 6669 & -387\\
-9 & -1369 & 58995 & 315315 & 98277 & -163179 & -29583 & 81\\
1 & 2385 & 3429 & -256347 & -431613 & -26541 & 35799 & 1143
\end{pmatrix},\label{eq:A9cube}
\end{align}
}%
and for $k=4,5,6,7$:
\begin{equation}\label{eq:A9pow}
\begin{aligned}
A_9^4&=2^3\,P_4,\quad &A_9^5&=2^5\,P_5,\\
A_9^6&=2^8\,P_6,\quad &A_9^7&=2^{10}\,P_7,
\end{aligned}
\end{equation}
where $P_4,P_5,P_6,P_7$ are integer matrices whose entries have no common factor of $2$,
and both $P_4$ and $P_6$ have all entries odd.
{\tiny
\setlength{\arraycolsep}{2pt}
\begin{align*}
P_4&=\frac{A_9^4}{2^3}=
\begin{pmatrix}
-3189 & -125001 & 828927 & 3304747 & 305937 & -669771 & -25179 & 297\\
3789 & -61887 & -1341063 & -763155 & 2289783 & 647091 & -33021 & -161\\
-1269 & 136575 & 377711 & -2568573 & -2534607 & 157869 & 78741 & -495\\
-963 & -68247 & 776889 & 2948421 & -125145 & -940149 & -40285 & 1287\\
1287 & -40285 & -940149 & -125145 & 2948421 & 776889 & -68247 & -963\\
-495 & 78741 & 157869 & -2534607 & -2568573 & 377711 & 136575 & -1269\\
-161 & -33021 & 647091 & 2289783 & -763155 & -1341063 & -61887 & 3789\\
297 & -25179 & -669771 & 305937 & 3304747 & 828927 & -125001 & -3189
\end{pmatrix},\\[4pt]
P_6&=\frac{A_9^6}{2^8}=
\begin{pmatrix}
-235033 & -9906597 & 134104059 & 481604751 & 5094981 & -142523199 & -7667175 & 176661\\
300753 & -9071683 & -167398947 & -55236951 & 412850019 & 122152455 & -6607953 & -138285\\
-71145 & 17701731 & 29095931 & -425611449 & -412932555 & 27048249 & 15278841 & -57123\\
-183855 & -8577819 & 129905613 & 457815521 & -20616957 & -154149345 & -8209377 & 226395\\
226395 & -8209377 & -154149345 & -20616957 & 457815521 & 129905613 & -8577819 & -183855\\
-57123 & 15278841 & 27048249 & -412932555 & -425611449 & 29095931 & 17701731 & -71145\\
-138285 & -6607953 & 122152455 & 412850019 & -55236951 & -167398947 & -9071683 & 300753\\
176661 & -7667175 & -142523199 & 5094981 & 481604751 & 134104059 & -9906597 & -235033
\end{pmatrix}.
\end{align*}
}

Furthermore, direct matrix powers give
\[
\begin{aligned}
\min_{i,j}\nu_2\bigl((A_9^4)_{ij}\bigr)&=3,&
\min_{i,j}\nu_2\bigl((A_9^5)_{ij}\bigr)&=5,\\
\min_{i,j}\nu_2\bigl((A_9^6)_{ij}\bigr)&=8,&
\min_{i,j}\nu_2\bigl((A_9^7)_{ij}\bigr)&=10,\\
\min_{i,j}\nu_2\bigl((A_9^8)_{ij}\bigr)&=13.
\end{aligned}
\]
Direct computation shows that the minimal polynomial of $A_9$ is
\begin{equation}\label{eq:A9min}
\mu_{A_9}(X)=X^8-6560X^6+8472576X^4-2235564032X^2+68719476736,
\end{equation}
so in particular $\mu_{A_9}(A_9)=0$.  Also,
\[
\begin{aligned}
\nu_2(6560)&=5,\qquad \nu_2(8472576)=11,\\
\nu_2(2235564032)&=22,\qquad \nu_2(68719476736)=36.
\end{aligned}
\]

From $\mu_{A_9}(A_9)=0$ and~\eqref{eq:Akrec}, the $i$-th entry of $X_{2^k n}$ is
$t_9(2^{k+1}n-i)$, so equating the $i$-th entries on both sides yields the scalar recurrence
\[
\begin{aligned}
t_9(2^9 n-i)=\;&6560\,t_9(2^7 n-i)-8472576\,t_9(2^5 n-i)\\
&+2235564032\,t_9(2^3 n-i)-68719476736\,t_9(2n-i),
\end{aligned}
\]
valid for all $n\ge 1$ and $i\in\{1,\ldots,8\}$.
The first eight block vectors are:
{\scriptsize
\[
\begin{aligned}
U_1&=(1,-9,27,-3,-171,387,-81,-1143)^T,\\
U_2&=(2376,-1288,-3960,10296,-7704,-10152,30312,-25512)^T,\\
U_3&=(-18009,71073,-63091,-41445,152307,-114315,-105255,317871)^T,\\
U_4&=(-222816,-252576,665568,-414944,-476640,1183968,-767904,-770400)^T,\\
U_5&=(1960767,-1141047,-1336923,3006531,-1643797,-2245059,4900113,-2608713)^T,\\
U_6&=(-3481128,7519464,-4513320,-4342680,10920888,-7085816,-5454216,14942664)^T,\\
U_7&=(-10273287,-7470657,21907539,-15158523,-10748115,31962987,-24134777,-12645135)^T,\\
U_8&=(45225216,-35400960,-14623488,57957120,-47066880,-18213120,76992768,-60168448)^T.
\end{aligned}
\]
}

\begin{lemma}[The mod $8$ block recurrences]\label{lem:t9mod8}
For every $m\ge 0$ and every $r\in\{0,1,\ldots,7\}$,
\[
t_9(8m+r)\equiv \sum_{k=0}^{7} c_{r,k}\,t_9(m-k)\pmod 8,
\]
where the coefficient vectors $(c_{r,0},\ldots,c_{r,7})$ are, modulo $8$,
\[
(1,1,5,5,3,3,7,7),\ (7,7,3,3,5,5,1,1),\ (3,3,7,7,1,1,5,5),\ (5,5,1,1,7,7,3,3),
\]
\[
(5,5,1,1,7,7,3,3),\ (3,3,7,7,1,1,5,5),\ (7,7,3,3,5,5,1,1),\ (1,1,5,5,3,3,7,7),
\]
in the order $r=0,1,\ldots,7$.
\end{lemma}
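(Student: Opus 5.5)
The plan is to use the identity \eqref{eq:h} with $a=2$ and $b=9$, exactly as was done for $m=5$ with \eqref{eq:h25}. Put
\[
h^{(2,9)}(x):=(1-x)^9(1-x^2)^9(1-x^4)^9=\sum_{\ell=0}^{63}h_\ell x^\ell .
\]
Then $\sum_n t_9(n)x^n=h^{(2,9)}(x)\sum_n t_9(n)x^{8n}$. Comparing the coefficients of $x^{8m+r}$ gives the exact identity
\[
t_9(8m+r)=\sum_{k=0}^{7} h_{8k+r}\,t_9(m-k),
\]
with $t_9(n)=0$ for $n<0$. So we may take $c_{r,k}=h_{8k+r}$, and the lemma reduces to computing the $64$ coefficients $h_\ell$ modulo $8$ and reading them off in blocks of eight.

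To do this efficiently, work in $(\mathbb Z/8\mathbb Z)[x]$.
\begin{itemize}
\item Write $(1-x)^9(1-x^2)^9(1-x^4)^9=(1-x)^9(1+x)^9(1-x^2)^{9}(1+x^2)^9(1-x^4)^9$ divided appropriately. More directly, note $(1-x)(1-x^2)(1-x^4)=(1-x)^3(1+x)^2(1+x^2)$.
\item A cleaner route is to use $(1-x)(1+x)(1+x^2)(1+x^4)=1-x^8$. This gives
\[
h^{(2,9)}(x)=\frac{(1-x)^{27}(1-x^8)^{9}\cdot(1+x)^{9}(1+x^2)^{9}(1+x^4)^{9}}{(1-x^8)^{9}(1+x)^{9}\cdots}.
\]
This is circular, so I would instead simply expand $(1-x)^9$, $(1-x^2)^9$, $(1-x^4)^9$ modulo $8$ using $\binom 9k\bmod 8=(1,1,4,4,6,6,4,4,1,1)$ and multiply.
\end{itemize}
Because the claimed vectors have the pairing pattern $c_{r,2i}=c_{r,2i+1}$, I expect a factor $(1+x^8)$ to split off modulo $8$. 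This suggests factoring
\[
h^{(2,9)}(x)\equiv (1+x^8)\,g(x^{16})\,p(x)\pmod 8
\]
and checking that form against the computed expansion.

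The symmetry $c_{7-r,k}=c_{r,k}$ should come from the palindromic property $h_{63-\ell}=-h_\ell$. The polynomial is anti-palindromic of odd degree $63$, since it is a product of $27$ factors of the form $1-x^{e}$. Combining this with the block pairing gives the reflection observed in the table. I would use it as a consistency check rather than as a proof.

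The main obstacle is purely computational bookkeeping: getting all $64$ residues right. I would carry out the product modulo $8$ in two stages, first $(1-x)^9(1-x^2)^9$ (degree $27$) and then multiplication by $(1-x^4)^9$, reducing at every step. I would then verify the output against the first rows of $U_1,\dots,U_8$ listed above as a sanity check, for instance $t_9(8m+r)$ for $m=0$ must equal $h_r$.
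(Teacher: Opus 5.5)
Your proposal is correct and is essentially the paper's argument. The exact identity $t_9(8m+r)=\sum_{k=0}^{7}h_{8k+r}\,t_9(m-k)$ that you get from \eqref{eq:h} with $a=2$ is the same recurrence as three applications of Lemma~\ref{lem:rec}, i.e.\ $c_{r,k}=h_{8k+r}=(A_9^3)_{8-r,k+1}$, and the paper simply reduces those rows of $A_9^3$ modulo $8$. Your anticipated factorisation also holds and removes the bookkeeping: since $(1-y)^8\equiv(1+y^2)^4\pmod 8$, one finds $h^{(2,9)}(x)\equiv p(x)\,(1+x^8)(1-x^{16})^3\pmod 8$ with $p(x)=\sum_{r=0}^{7}h_rx^r\equiv 1+7x+3x^2+5x^3+5x^4+3x^5+7x^6+x^7$, so $c_{r,k}\equiv p_r w_k$ with $(w_k)=(1,1,5,5,3,3,7,7)$, which is exactly the stated table.
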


\begin{proof}
Applying Lemma~\ref{lem:rec} three times gives the exact recurrences for
$t_9(8m+r)$, equivalently the rows of $A_9^3$ in~\eqref{eq:A9cube}.  Reducing those
rows modulo $8$ gives the coefficient vectors displayed above.
\end{proof}

\begin{lemma}[Residue classes of $X_{n_1}$ modulo $8$]\label{lem:Xmod8}
If $n_1$ is odd, then $X_{n_1}\pmod 8$ depends only on $n_1\pmod 8$ and is given by
\[
X_{n_1}\equiv
\begin{cases}
(7,1,0,0,0,0,0,0)^T,& n_1\equiv 1\pmod 8,\\[2pt]
(3,5,5,3,7,1,0,0)^T,& n_1\equiv 3\pmod 8,\\[2pt]
(0,0,1,7,3,5,5,3)^T,& n_1\equiv 5\pmod 8,\\[2pt]
(0,0,0,0,0,0,1,7)^T,& n_1\equiv 7\pmod 8.
\end{cases}
\pmod 8.
\]
\end{lemma}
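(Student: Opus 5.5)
The plan is a strong induction on $n_1$ in which the only input is Lemma~\ref{lem:t9mod8}. The eight entries of $X_{n_1}$ are $t_9(2n_1-1),\ldots,t_9(2n_1-8)$, a window of eight consecutive indices ending at $2n_1-1$. For $n_1$ odd, $2n_1-1\equiv 1,5\pmod 8$. So every such index can be written as $8m+r$ with $m\in\{\lfloor n_1/4\rfloor-1,\lfloor n_1/4\rfloor\}$, and this window straddles at most two consecutive blocks $U_{m}$ and $U_{m+1}$.

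By Lemma~\ref{lem:t9mod8}, each entry is congruent mod $8$ to $\sum_k c_{r,k}t_9(m-k)$. The right-hand side involves only the window $W_m:=(t_9(m),t_9(m-1),\ldots,t_9(m-8))$ mod $8$, where the extra entry comes from the second block. The point is that $W_m$ is not in general of the form $X_{\text{odd}}$. The induction hypothesis therefore has to be strengthened to a statement about all windows.

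First, I will exploit the visible structure of the coefficient vectors. They satisfy $c_{r,2i}=c_{r,2i+1}$, every entry is odd, $c_{7-r}=c_r$, and $c_{r+1}\equiv -c_r$ for even $r$. Hence $t_9(8m+r)$ mod $8$ depends only on the four pair sums $s_i:=t_9(m-2i)+t_9(m-2i-1)$ mod $8$, and the whole block $U_{m+1}$ mod $8$ is determined by two numbers, namely $\sum_i c_{0,2i}s_i$ and $\sum_i c_{2,2i}s_i$ mod $8$. So the state needed is small.

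Second, I will formulate the strengthened claim: $U_{n}$ mod $8$ takes one of a short explicit list of values, and which one is determined by $n$ modulo a small power of $2$ (I expect $16$ or $32$) together with a parity datum. Reading off $U_1,\ldots,U_8$ gives $U_1\equiv(1,7,3,5,5,3,7,1)$, $U_2\equiv(0,0,0,0,0,0,0,0)$, $U_3\equiv(7,1,5,3,3,5,1,7)$ and $U_4\equiv 0$. This suggests that $U_n\equiv 0 \pmod 8$ for $n$ even. For $n$ odd it suggests $U_n\equiv\pm U_1 \pmod 8$, or one of a few related vectors, with the sign governed by $n$ mod $4$ or $8$.

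I will then verify closure. Plugging the claimed shapes of $U_{m}$ and $U_{m+1}$ into the pair sums, and applying Lemma~\ref{lem:t9mod8}, must reproduce the claimed shape of $U_{8m+r}$ for the descendant indices. This is a finite check over residue classes. The base cases are $U_1,\ldots,U_8$, which are given explicitly.

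Finally, I will read off $X_{n_1}$ from two adjacent blocks. For example, when $n_1\equiv1\pmod 8$ the window consists of the last six entries of an even-indexed block, which are $\equiv 0$, followed by the first two entries of an odd block. This gives the predicted $(7,1,0,\ldots,0)$ up to the sign and ordering dictated by the reversed indexing of $X$. The other three residue classes are handled the same way, using the offsets $2n_1-1\equiv 5,1,5\pmod 8$ in the respective cases.

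The main obstacle is pinning down the correct invariant for $U_n$ mod $8$ when $n$ is odd, in particular whether its sign really depends only on $n$ mod $8$ or on something finer. The claimed dependence of $X_{n_1}$ only on $n_1$ mod $8$ requires that for even $n$ the block $U_n$ vanish mod $8$ (not just mod $2$). I would first test the conjectured pattern against $U_5,U_6,U_7,U_8$, and then check that the pair sums of a window consisting of one zero block and one $\pm U_1$ block feed back consistently through Lemma~\ref{lem:t9mod8}.
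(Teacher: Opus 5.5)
The problem you flag at the end is fatal, because the statement as written is false. No invariant of the kind you propose can close your induction. In fact $U_{2k}\equiv 0$ and $U_{2k+1}\equiv t(k)\,U_1\pmod 8$, where $t(k)=\pm1$ is the Thue--Morse sign, and this sign is not periodic in $k$. Already $U_5\equiv(7,1,5,3,3,5,1,7)^T\equiv -U_1$, and also $U_9\equiv U_{17}\equiv -U_1$. So the sign is governed neither by $n\bmod 4$, nor by $n\bmod 8$, nor by $n$ modulo any fixed power of $2$. Concretely, $X_9=(t_9(17),t_9(16),t_9(15),\dots,t_9(10))^T$. The listed values $U_2\equiv 0$ and $U_3=(-18009,71073,\dots)^T$ give $X_9\equiv(1,7,0,0,0,0,0,0)^T=-\xi_1\pmod 8$, although $9\equiv 1\pmod 8$. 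Your final step, which delivers $(7,1,0,\dots,0)^T$ only ``up to the sign'', therefore cannot be sharpened. The paper's proof rests on the same false premise. Lemma~\ref{lem:t9mod8} expresses $B_m$ through $t_9(m),\dots,t_9(m-7)$, and that does not make $B_m$ depend only on $m\bmod 4$; indeed $B_4=U_5\equiv -B_0$. The paper's case analysis also reads ``$\lfloor u/2\rfloor$ even'' as ``$\lfloor u/2\rfloor\equiv 0\pmod 4$''. This contradicts its own table at $n_1=9$, where $\lfloor u/2\rfloor=2$ and $B_2\equiv -B_0$.

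What your pair-sum reduction does prove is the corrected statement $X_{n_1}\equiv t(\lfloor n_1/8\rfloor)\,\xi_s\pmod 8$, with $s$ determined by $n_1\bmod 8$ as in the lemma. Since $\binom{9}{2j}\equiv\binom{9}{2j+1}\equiv\binom{4}{j}\pmod 8$, Lemma~\ref{lem:rec} gives $t_9(2q+1)\equiv -t_9(2q)$ and $t_9(2q)\equiv\sum_j\binom{4}{j}t_9(q-j)\pmod 8$. Hence all your pair sums vanish when $m$ is odd, so $U_n\equiv 0$ for even $n$. Next, $c_{r,\cdot}\equiv\epsilon_r\,c_{0,\cdot}$ with $(\epsilon_0,\dots,\epsilon_7)=(1,7,3,5,5,3,7,1)$. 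So for $m=2p$ you get $U_{2p+1}\equiv\lambda_pU_1$ with $\lambda_p=\sum_i c_{0,2i}s_i$. Substitute $s_i\equiv t_9(2p-2i)-t_9(2p-2i-2)$ to get $\lambda_p\equiv\sum_k\binom{4}{k}t_9(2p-2k)$. The even-index recurrence then gives $\lambda_p\equiv\sum_l\binom{8}{l}t_9(p-l)$, which is the coefficient of $x^p$ in $(1+x)^8T(x)^9$. Finally, use $(1\pm y)^8\equiv(1+y^2)^4$ and $(1+y)^4\equiv(1-y)^4\pmod 8$. These give $T(x)^8\equiv(1-x^2)^{-4}$, hence $(1+x)^8T(x)^9\equiv T(x)\pmod 8$, so $\lambda_p\equiv t(p)$. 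This needs no guessed period and no induction. Reading off the two adjacent blocks as you describe then yields the sign $t(\lfloor n_1/8\rfloor)$ rather than a constant sign.
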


\begin{proof}
By Lemma~\ref{lem:t9mod8}, the residue of the block
\[
B_m:=\bigl(t_9(8m),t_9(8m+1),\ldots,t_9(8m+7)\bigr)\pmod 8
\]
depends only on $m\pmod 4$.  Computing the initial values $t_9(0),\ldots,t_9(31)$ gives
\[
B_m\equiv
\begin{cases}
(1,7,3,5,5,3,7,1),& m\equiv 0\pmod 4,\\
(0,0,0,0,0,0,0,0),& m\equiv 1\text{ or }3\pmod 4,\\
(7,1,5,3,3,5,1,7),& m\equiv 2\pmod 4.
\end{cases}
\]
All congruences in this display are taken modulo $8$.

Now write $n_1=2u+1$.  Then
\[
X_{n_1}=\bigl(
\begin{array}{c}
t_9(4u+1),t_9(4u),t_9(4u-1),t_9(4u-2),\\
t_9(4u-3),t_9(4u-4),t_9(4u-5),t_9(4u-6)
\end{array}
\bigr)^T.
\]
These entries are obtained by concatenating the last six entries of
$B_{\lfloor u/2\rfloor-1}$ with the first two entries of $B_{\lfloor u/2\rfloor}$.
We now distinguish the four possibilities for $n_1\pmod 8$.

\smallskip
\noindent If $n_1\equiv 1\pmod 8$, then $u\equiv 0\pmod 4$.  Hence $\lfloor u/2\rfloor$ is even,
so
\[
B_{\lfloor u/2\rfloor}\equiv(1,7,3,5,5,3,7,1),
\qquad
B_{\lfloor u/2\rfloor-1}\equiv 0.
\]
Therefore
\[
X_{n_1}\equiv (7,1,0,0,0,0,0,0)^T\pmod 8.
\]

\smallskip
\noindent If $n_1\equiv 3\pmod 8$, then $u\equiv 1\pmod 4$.  Again $\lfloor u/2\rfloor$ is even,
so
\[
B_{\lfloor u/2\rfloor}\equiv(1,7,3,5,5,3,7,1),
\qquad
B_{\lfloor u/2\rfloor-1}\equiv 0.
\]
Hence
\[
X_{n_1}\equiv (3,5,5,3,7,1,0,0)^T\pmod 8.
\]

\smallskip
\noindent If $n_1\equiv 5\pmod 8$, then $u\equiv 2\pmod 4$.  Thus $\lfloor u/2\rfloor$ is odd,
so
\[
B_{\lfloor u/2\rfloor}\equiv 0,
\qquad
B_{\lfloor u/2\rfloor-1}\equiv(1,7,3,5,5,3,7,1).
\]
Therefore
\[
X_{n_1}\equiv (0,0,1,7,3,5,5,3)^T\pmod 8.
\]

\smallskip
\noindent If $n_1\equiv 7\pmod 8$, then $u\equiv 3\pmod 4$.  Again $\lfloor u/2\rfloor$ is odd,
so
\[
B_{\lfloor u/2\rfloor}\equiv 0,
\qquad
B_{\lfloor u/2\rfloor-1}\equiv(1,7,3,5,5,3,7,1).
\]
Thus
\[
X_{n_1}\equiv (0,0,0,0,0,0,1,7)^T\pmod 8.
\]
This proves the four claimed residue patterns.
\end{proof}

\begin{lemma}[Congruence images of the four residue classes]\label{lem:Ximages}
Let
\[
\xi_1:=(7,1,0,0,0,0,0,0)^T,\quad
\xi_2:=(3,5,5,3,7,1,0,0)^T,
\]
\[
\xi_3:=(0,0,1,7,3,5,5,3)^T,\quad
\xi_4:=(0,0,0,0,0,0,1,7)^T.
\]
Then, for each $s\in\{1,2,3,4\}$,
\[
A_9^2\xi_s\equiv (1,1,1,1,1,1,1,1)^T\pmod 2,
\]
\[
A_9^3\xi_s\equiv (8,8,8,8,8,8,8,8)^T\pmod{16},
\]
\[
P_4\xi_s\equiv (4,4,4,4,4,4,4,4)^T\pmod 8,
\qquad
P_5\xi_s\equiv (8,8,8,8,8,8,8,8)^T\pmod{16},
\]
\[
P_6\xi_s\equiv (4,4,4,4,4,4,4,4)^T\pmod 8,
\qquad
P_7\xi_s\equiv (8,8,8,8,8,8,8,8)^T\pmod{16}.
\]
\end{lemma}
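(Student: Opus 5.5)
This lemma is a finite verification, and I would organise it to keep the arithmetic small and the reduction moduli straight. Each claimed congruence is a product of an explicit integer matrix with an explicit integer vector, reduced modulo a power of $2$. Each right-hand side depends only on the matrix entries modulo $2$, $8$ or $16$. So I would first reduce every matrix modulo the relevant power of two:
\begin{itemize}
\item $A_9^2$ from \eqref{eq:A9sq} modulo $2$;
\item $A_9^3$ from \eqref{eq:A9cube} modulo $16$;
\item $P_4$ and $P_6$ modulo $8$;
\item $P_5$ and $P_7$ modulo $16$.
\end{itemize}
I would then compute the four products $M\xi_s$ for each of these six matrices $M$, giving $24$ matrix--vector products.

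The vectors $\xi_s$ are sparse, which shortens the work. Since $\xi_1$ is supported on coordinates $1,2$ and $\xi_4$ on coordinates $7,8$, the products $M\xi_1$ and $M\xi_4$ only involve two columns of $M$. The vectors $\xi_2$ and $\xi_3$ involve six columns.

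I would also use the symmetry from the Remark, $JA_9J=-A_9$. It gives $JA_9^kJ=(-1)^kA_9^k$, hence $JP_kJ=(-1)^kP_k$. One can also check that $J\xi_1\equiv-\xi_4$ and $J\xi_2\equiv-\xi_3 \pmod 8$. Therefore
\[
A_9^k\xi_4\equiv -JA_9^k\,J\xi_4\cdot(-1)^k\quad\text{etc.},
\]
so $M\xi_4$ is $\pm J(M\xi_1)$, and $M\xi_3$ is $\pm J(M\xi_2)$. Because the targets are constant vectors $c\mathbf{1}$ with $c\equiv -c$ (namely $1\bmod 2$, $4\bmod 8$, $8\bmod 16$), these targets are invariant under $\pm J$. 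So it suffices to verify the cases $s=1$ and $s=2$.

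For $A_9^2$ modulo $2$, the parities of columns $1$ and $2$ give $7\cdot\mathrm{col}_1+\mathrm{col}_2\equiv \mathrm{col}_1+\mathrm{col}_2 \pmod 2$, which should be the all-ones vector. The case $\xi_2$ is handled similarly.

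For $A_9^3$ modulo $16$, I would compute $7\,\mathrm{col}_1+\mathrm{col}_2$ and the six-column combination explicitly, reducing each entry modulo $16$ first.

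For $P_5$ and $P_7$, the paper only displays $P_4$ and $P_6$. I would obtain them as $P_5=(A_9P_4)/4$ and $P_7=(A_9P_6)/4$, using $A_9^5=A_9\cdot A_9^4$ and $A_9^7=A_9\cdot A_9^6$. This gives a cleaner route: $P_5\xi_s=\tfrac14A_9(P_4\xi_s)$, and $P_4\xi_s$ is already known modulo $8$. That alone does not determine $P_5\xi_s$ modulo $16$, since it would require $P_4\xi_s$ modulo $64$. So I would instead compute $P_4\xi_s$ exactly as an integer vector, then $A_9P_4\xi_s$ exactly, divide by $4$, and reduce modulo $16$. The same procedure handles $P_7$ via $P_6$.

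I expect the main obstacle to be this step: controlling the higher powers $P_5,P_7$ modulo $16$ without their entries displayed. Carrying exact integers through $A_9(P_4\xi_s)$ avoids any loss of $2$-adic precision, and is what I would do first. Everything else is routine reduction, best confirmed by a short computer check of the $24$ products (halved to $12$ by the $J$-symmetry).
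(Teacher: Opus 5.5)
Your plan is correct and is essentially the paper's proof: a direct finite check of the reduced matrix--vector products. Your $J$-symmetry halves the work, and your route $P_5=A_9P_4/4$, $P_7=A_9P_6/4$ (carrying $P_4\xi_s$ and $P_6\xi_s$ exactly before dividing) fills a point the paper glosses over, since $P_5$ and $P_7$ are not actually displayed there. One harmless slip: in fact $J\xi_1=\xi_4$ and $J\xi_2=\xi_3$ exactly (no minus sign), so $A_9^k\xi_4=(-1)^kJA_9^k\xi_1$; this does not matter because each target $c\mathbf{1}$ satisfies $c\equiv -c$.
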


\begin{proof}
By Lemma~\ref{lem:Xmod8}, every odd $n_1$ satisfies $X_{n_1}\equiv \xi_s\pmod 8$ for a unique
$s\in\{1,2,3,4\}$.  For the first congruence, reduce~\eqref{eq:A9sq} modulo $2$:
rows $1$--$4$ become $(1,0,1,0,1,0,1,0)$ and rows $5$--$8$ become
$(0,1,0,1,0,1,0,1)$.  Hence
\[
(A_9^2v)_i\equiv v_1+v_3+v_5+v_7\pmod 2\qquad (1\le i\le 4),
\]
and
\[
(A_9^2v)_i\equiv v_2+v_4+v_6+v_8\pmod 2\qquad (5\le i\le 8).
\]
For each $\xi_s$, both sums are odd:
\[
\begin{aligned}
\xi_1:&\quad 7+0+0+0\equiv 1+0+0+0\equiv 1\pmod 2,\\
\xi_2:&\quad 3+5+7+0\equiv 5+3+1+0\equiv 1\pmod 2,\\
\xi_3:&\quad 0+1+3+5\equiv 0+7+5+3\equiv 1\pmod 2,\\
\xi_4:&\quad 0+0+0+1\equiv 0+0+0+7\equiv 1\pmod 2.
\end{aligned}
\]
This proves the congruence for $A_9^2\xi_s$.

For the remaining congruences, reduce the displayed matrices $A_9^3$, $P_4$, $P_5$, $P_6$ and $P_7$
modulo $16$, $8$, $16$, $8$ and $16$, respectively, and multiply by each of the four vectors
$\xi_1,\xi_2,\xi_3,\xi_4$.  These computations give exactly the congruences stated in the lemma.
\end{proof}

\begin{proof}[Proof of Theorem~\ref{thm:m9}]
Let
\[
g_9(r):=5\Bigl\lceil\frac{r}{2}\Bigr\rceil-2(r\bmod 2).
\]
We prove by induction on $n\ge 0$ that
\[
\nu_2\bigl(t_9(8n+j)\bigr)=g_9\bigl(\nu_2(n+1)\bigr),
\qquad j\in\{0,1,\ldots,7\}.
\]
The initial cases $0\le n\le 7$ follow from the explicit block vectors
$U_1,\ldots,U_8$ listed above.  Writing these valuations out,
\[
\begin{aligned}
&\nu_2(U_1)=\nu_2(U_3)=\nu_2(U_5)=\nu_2(U_7)=(0,0,0,0,0,0,0,0),\\
&\nu_2(U_2)=\nu_2(U_6)=(3,3,3,3,3,3,3,3),\\
&\nu_2(U_4)=(5,5,5,5,5,5,5,5),\qquad
\nu_2(U_8)=(8,8,8,8,8,8,8,8).
\end{aligned}
\]
Since
\[
g_9\bigl(\nu_2(1)\bigr)=0,\ g_9\bigl(\nu_2(2)\bigr)=3,\ g_9\bigl(\nu_2(3)\bigr)=0,\ g_9\bigl(\nu_2(4)\bigr)=5,
\]
\[
g_9\bigl(\nu_2(5)\bigr)=0,\ g_9\bigl(\nu_2(6)\bigr)=3,\ g_9\bigl(\nu_2(7)\bigr)=0,\ g_9\bigl(\nu_2(8)\bigr)=8,
\]
the claimed formula holds for $n=0,1,\ldots,7$.

Write $n+1=2^r n_1$ with $n_1$ odd.  For each $j\in\{0,\ldots,7\}$ we have
\[
t_9(8n+j)=t_9\bigl(2^{r+3}n_1-8+j\bigr)=t_9\bigl(2\cdot 2^{r+2}n_1-(8-j)\bigr),
\]
so $t_9(8n+j)$ is the $(8-j)$-th entry of $X_{2^{r+2}n_1}$.
Hence it is enough to prove that
\[
\nu_2\bigl((X_{2^{r+2}n_1})_i\bigr)=g_9(r)\quad\text{for all }i\in\{1,\ldots,8\}.
\]

\noindent\textbf{Case $r=0$.}
Then $n$ is even and $g_9(0)=0$, so we must show that all entries of $X_{4n_1}$ (with $n_1:=n+1$ odd) are odd.

By Lemma~\ref{lem:Xmod8}, the vector $X_{n_1}\pmod 8$ is one of the four vectors
$\xi_1,\xi_2,\xi_3,\xi_4$ from Lemma~\ref{lem:Ximages}.  Therefore
Lemma~\ref{lem:Ximages} gives
\[
X_{4n_1}=A_9^2X_{n_1}\equiv (1,1,1,1,1,1,1,1)^T\pmod 2.
\]
Hence every entry of $X_{4n_1}$ is odd, as required.

\medskip

\noindent\textbf{Case $r=1$.}
Then $n+1=2n_1$ with $n_1$ odd.  We show that every entry of $X_{8n_1}$ has valuation $g_9(1)=3$.

By~\eqref{eq:Akrec},
\[
X_{8n_1}=A_9^3X_{n_1}.
\]
By Lemmas~\ref{lem:Xmod8} and~\ref{lem:Ximages}, in every case
\[
A_9^3X_{n_1}\equiv (8,8,8,8,8,8,8,8)^T\pmod{16}.
\]
Hence every entry of $X_{8n_1}$ is divisible by $8$ but not by $16$, and therefore
\[
\nu_2\bigl((X_{8n_1})_i\bigr)=3=g_9(1)
\qquad (i=1,\ldots,8).
\]

\medskip

\noindent\textbf{Case $r=2$.}
Then $n+1=4n_1$ with $n_1$ odd.  Our goal is to prove that each entry of $X_{16n_1}$ has
valuation $g_9(2)=5$.

By~\eqref{eq:Akrec} and~\eqref{eq:A9pow},
\[
X_{16n_1}=A_9^4X_{n_1}=2^3P_4X_{n_1}.
\]
By Lemmas~\ref{lem:Xmod8} and~\ref{lem:Ximages}, in every case
\[
P_4X_{n_1}\equiv (4,4,4,4,4,4,4,4)^T\pmod 8.
\]
Hence each entry of $P_4X_{n_1}$ is divisible by $4$ but not by $8$, and therefore every
entry of $X_{16n_1}=2^3P_4X_{n_1}$ is divisible by $2^5$ but not by $2^6$. Thus
\[
\nu_2\bigl((X_{16n_1})_i\bigr)=5=g_9(2)
\qquad (i=1,\ldots,8).
\]

\medskip

\noindent\textbf{Case $r=3$.}
Then $n+1=8n_1$ with $n_1$ odd.  We claim that all entries of $X_{32n_1}$ have valuation
$g_9(3)=8$.

By~\eqref{eq:Akrec} and~\eqref{eq:A9pow},
\[
X_{32n_1}=A_9^5X_{n_1}=2^5P_5X_{n_1}.
\]
By Lemmas~\ref{lem:Xmod8} and~\ref{lem:Ximages}, in every case
\[
P_5X_{n_1}\equiv (8,8,8,8,8,8,8,8)^T\pmod{16}.
\]
Hence each entry of $P_5X_{n_1}$ is divisible by $8$ but not by $16$, and therefore every
entry of $X_{32n_1}=2^5P_5X_{n_1}$ is divisible by $2^8$ but not by $2^9$. Thus
\[
\nu_2\bigl((X_{32n_1})_i\bigr)=8=g_9(3)
\qquad (i=1,\ldots,8).
\]

\medskip

\noindent\textbf{Case $r=4$.}
Then $n+1=16n_1$ with $n_1$ odd.  We show that every entry of $X_{64n_1}$ has
valuation $g_9(4)=10$.

By~\eqref{eq:Akrec} and~\eqref{eq:A9pow},
\[
X_{64n_1}=A_9^6X_{n_1}=2^8P_6X_{n_1}.
\]
By Lemmas~\ref{lem:Xmod8} and~\ref{lem:Ximages}, in every case
\[
P_6X_{n_1}\equiv (4,4,4,4,4,4,4,4)^T\pmod 8.
\]
Hence each entry of $P_6X_{n_1}$ is divisible by $4$ but not by $8$, and therefore every
entry of $X_{64n_1}=2^8P_6X_{n_1}$ is divisible by $2^{10}$ but not by $2^{11}$. Thus
\[
\nu_2\bigl((X_{64n_1})_i\bigr)=10=g_9(4)
\qquad (i=1,\ldots,8).
\]

\medskip

\noindent\textbf{Case $r=5$.}
Then $n+1=32n_1$ with $n_1$ odd.  Our goal is to prove that all entries of $X_{128n_1}$
have valuation $g_9(5)=13$.

By~\eqref{eq:Akrec} and~\eqref{eq:A9pow},
\[
X_{128n_1}=A_9^7X_{n_1}=2^{10}P_7X_{n_1}.
\]
By Lemmas~\ref{lem:Xmod8} and~\ref{lem:Ximages}, in every case
\[
P_7X_{n_1}\equiv (8,8,8,8,8,8,8,8)^T\pmod{16}.
\]
Hence each entry of $P_7X_{n_1}$ is divisible by $8$ but not by $16$, and therefore every
entry of $X_{128n_1}=2^{10}P_7X_{n_1}$ is divisible by $2^{13}$ but not by $2^{14}$. Thus
\[
\nu_2\bigl((X_{128n_1})_i\bigr)=13=g_9(5)
\qquad (i=1,\ldots,8).
\]

\medskip

\noindent\textbf{Case $r=6$.}
Then $n+1=64n_1$ with $n_1$ odd.  We show that each entry of $X_{256n_1}$ has valuation
$g_9(6)=15$.

Applying~\eqref{eq:Akrec} eight times from $X_{n_1}$ and using the minimal
polynomial~\eqref{eq:A9min}, we obtain
\[
X_{256n_1}=6560\,X_{64n_1}-8472576\,X_{16n_1}+2235564032\,X_{4n_1}-68719476736\,X_{n_1}.
\]
The four vectors on the right correspond to the already established cases $r=4,2,0,0$,
so their entries have valuations $g_9(4)=10$, $g_9(2)=5$, $g_9(0)=0$, and $g_9(0)=0$, respectively.
Therefore the four terms have valuations
\[
5+g_9(4)=15=g_9(6),\qquad 11+g_9(2)=16,\qquad 22+g_9(0)=22,\qquad 36+g_9(0)=36.
\]
Hence the first term has strictly the smallest $2$-adic valuation, and so
\[
\nu_2\bigl((X_{256n_1})_i\bigr)=15=g_9(6)
\qquad (i=1,\ldots,8).
\]

\medskip
\noindent\textbf{Case $r\ge 7$.}
Apply~\eqref{eq:Akrec} eight times starting from $X_{2^{r-6}n_1}$:
\[
X_{2^{r+2}n_1}=A_9^8\,X_{2^{r-6}n_1}.
\]
Using the minimal polynomial~\eqref{eq:A9min} this becomes
\begin{equation}\label{eq:X8step}
\begin{aligned}
X_{2^{r+2}n_1}
=\;&6560\,X_{2^r n_1}-8472576\,X_{2^{r-2}n_1}\\
&+2235564032\,X_{2^{r-4}n_1}-68719476736\,X_{2^{r-6}n_1}.
\end{aligned}
\end{equation}
The first three vectors correspond (by the same translation) to blocks
$2^{r-2}n_1-1$, $2^{r-4}n_1-1$, and $2^{r-6}n_1-1$, all strictly less than $n$.
Hence the induction hypothesis gives
\[
\nu_2\bigl((X_{2^r n_1})_i\bigr)=g_9(r-2),\qquad
\nu_2\bigl((X_{2^{r-2} n_1})_i\bigr)=g_9(r-4),
\]
\[
\nu_2\bigl((X_{2^{r-4} n_1})_i\bigr)=g_9(r-6).
\]
We now separate the last term.

If $r=7$, we only use the crude bound
\[
\nu_2\bigl((X_{2n_1})_i\bigr)\ge 0.
\]
If $r\ge 8$, then the last vector also corresponds to the smaller block
$2^{r-8}n_1-1$, and the induction hypothesis gives
\[
\nu_2\bigl((X_{2^{r-6} n_1})_i\bigr)=g_9(r-8).
\]
Since $g_9(r)-g_9(r-2)=5$ for all $r$, the four terms in~\eqref{eq:X8step} have
$2$-adic valuations
\begin{align*}
\nu_2(6560)+g_9(r-2)&=5+(g_9(r)-5)=g_9(r),\\
\nu_2(8472576)+g_9(r-4)&=11+(g_9(r)-10)=g_9(r)+1,\\
\nu_2(2235564032)+g_9(r-6)&=22+(g_9(r)-15)=g_9(r)+7,\\
\nu_2(68719476736)+g_9(r-8)&=36+(g_9(r)-20)=g_9(r)+16\qquad (r\ge 8),
\end{align*}
while for $r=7$ the last term has valuation at least $36>g_9(7)=18$.
The first term has strictly the smallest $2$-adic valuation, so
\[
\nu_2\bigl((X_{2^{r+2}n_1})_i\bigr)=g_9(r)=g_9\bigl(\nu_2(n+1)\bigr),
\]
completing the induction.
\end{proof}

\section*{Acknowledgements}
The author thanks Liu Sihan for helpful comments and suggestions.
This work was supported by the National Natural Science Foundation of China
(No.~12301011).

\end{document}